% SIAM Article Template
\documentclass[final,onefignum,onetabnum]{siamart190516}
%\documentclass[review,onefignum,onetabnum,final]{article}
%\input{preamble/preamp}
%\documentclass{article}
% Information that is shared between the article and the supplement
% (title and author information, macros, packages, etc.) goes into
% ex_shared.tex. If there is no supplement, this file can be included
% directly.
\usepackage{tikz}
\usepackage{graphicx}
\usepackage{placeins}
\usepackage{dsfont}
\usepackage{subcaption}
\usepackage{newtxtext,newtxmath}
\usepackage[colorinlistoftodos, obeyFinal]{todonotes}
\graphicspath{{./figs/}}

\usepackage{bbm,dsfont,mathrsfs,nicefrac} %Math stuff
\usepackage{amsmath}
\usepackage{amsfonts}
\usepackage{mathtools}
\usepackage[utf8]{inputenc}
\usepackage[T1]{fontenc}
\setlength\parindent{0pt}

\DeclareMathOperator\supp{supp}

% SIAM Shared Information Template
% This is information that is shared between the main document and any
% supplement. If no supplement is required, then this information can
% be included directly in the main document.

% Packages and macros go here
\usepackage{lipsum}
\usepackage{amsfonts}
\usepackage{graphicx}
\usepackage{epstopdf}
\usepackage{algorithmic}
\ifpdf
  \DeclareGraphicsExtensions{.eps,.pdf,.png,.jpg}
\else
  \DeclareGraphicsExtensions{.eps}
\fi

% Add a serial/Oxford comma by default.

% Used for creating new theorem and remark environments
\newsiamremark{remark}{Remark}
\newsiamremark{hypothesis}{Hypothesis}
\crefname{hypothesis}{Hypothesis}{Hypotheses}
\newsiamthm{claim}{Claim}

% Sets running headers as well as PDF title and authors
\headers{Stability estimates}{F. Triki and M. Karamehmedovi\'c and K. Linder-Steinlein}

% Title. If the supplement option is on, then "Supplementary Material"
% is automatically inserted before the title.
\title{Stability estimates for the inverse source problem with passive measurements\thanks{Submitted to the editors \today.
\funding{This work was funded by the by The Villum Foundation (grant no. 25893).}}}

% Authors: full names plus addresses.
\author{Kristoffer Linder-Steinlein\thanks{Department of Applied Mathematics, Technical University of Denmark, Kgs. Lyngby, Denmark 
  (\email{krlin@dtu.dk},  \url{https://orbit.dtu.dk/en/persons/kristoffer-linder-steinlein-2}).} 
  \and
  Mirza Karamehmedovi\'c\thanks{Department of Applied Mathematics, Technical University of Denmark, Kgs. Lyngby, Denmark 
  (\email{mika@dtu.dk}, \url{https://orbit.dtu.dk/en/persons/mirza-karamehmedovic}).}
\and 
    Faouzi Triki, \thanks{Laboratoire Jean Kuntzmann, Université Grenoble-Alpes, Grenoble, France 
  (\email{Faouzi.Triki@univ-grenoble-alpes.fr }, \url{https://membres-ljk.imag.fr/Faouzi.Triki/contact.html}).} }

\usepackage{amsopn}

%%% Local Variables: 
%%% mode:latex
%%% TeX-master: "ex_article"
%%% End: 

% Optional PDF information
\ifpdf
\hypersetup{
  pdftitle={Inverse source problem with passive measurements},
  pdfauthor={Kristoffer Linder-Steinlein and Mirza Karamehmedovi\'c and Faouzi Triki}
}
\fi

% The next statement enables references to information in the
% supplement. See the xr-hyperref package for details.

%\externaldocument{ex_supplement}

% FundRef data to be entered by SIAM
%<funding-group specific-use="FundRef">
%<award-group>
%<funding-source>
%<named-content content-type="funder-name"> 
%</named-content> 
%<named-content content-type="funder-identifier"> 
%</named-content>
%</funding-source>
%<award-id> </award-id>
%</award-group>
%</funding-group>

\begin{document}

\maketitle
%\todo[inline,color=yellow]{Note to self; General read through! Fix overall language and check notation, is it unified?}
% REQUIRED
\begin{abstract}
    
We consider  the multi-frequency inverse source problem in the presence of a non-homogeneous medium using passive measurements.  Precisely,  we derive stability estimates for determining the source from the knowledge of only the imaginary part of the radiated field on the boundary for multiple frequencies. The proof  combines a spectral decomposition with a  quantification of the unique continuation of the resolvent as a holomorphic function of the frequency.  The obtained results show that the inverse  problem is well posed when the frequency band is larger than the spatial frequency of the source.
\end{abstract}

% REQUIRED
\begin{keywords}
  Passive-imaging, Stability estimate, Helmholtz equation, Inverse source problem. 
\end{keywords}

% REQUIRED
\begin{AMS}
  34A55
\end{AMS}
%%%%%%%%%%%%%%%%%%%%%%%%%%%%%%%%%%%%%%%%%%%%%%%%%%%%%%%%%%%%%%%%%%%%%%%%%%%%%%%%%%%%%%%%%%%%%%%%%%
\section{Introduction}\label{sec:intro}
%\todo[inline,color=red]{Finish introduction. (F)}
This paper is concerned with the stability estimate for determining a one-dimensional source in the presence of a medium using passive measurements. That is, reconstruction of the source having access to only the imaginary part of the resulting wave-field at the domain's boundary.  This is a type of analysis of the time-reversal experiment \cite{garnier_papanicolaou_2016}. Time reversal has been well studied and is one of the most commonly used reconstruction methods in direct imaging for inverse source problems. It was first proposed for energy focusing in physics. Here we apply the Helmholtz-Kirchhoff identity to study time reversal \cite{Ammari2014, garnier_papanicolaou_2016}.\\

Multiple results already exist in the cases where the entire wave-field at the boundary is obtainable \cite{bao2010multi, cheng2016increasing, bao2015inverse, li2020, entekhabi2018increasing, bao2011numerical, acosta2012multi}. We here prove for the one-dimensional Helmholtz equation that it is possible to complete the missing real part of the resulting wave-field using the measurable quantities, and to recover the source. These results can be seen in Theorem \ref{mainstability1} 
for a large frequency band and in Theorem \ref{mainstability} for a short frequency band.\\ 

The remaining part of this section introduces the mathematical model and spaces from which the sources and media originate. This is followed by a statement regarding the available data and the considered inverse problem with passive 
measurements. In section \ref{sec:recon_r} the proof of the first main result is provided.  It is based on a spectral 
decomposition of the source in a specific orthonormal basis of eigenfunctions. \\

 In the last section, the second main stability estimate for the inverse problem is established using a quantification of the unique continuation for the resolvent of the Helmholtz operator as a holomorphic function of the frequency and making use of the results of the previous section \ref{sec:recon_r}.
\subsection{Mathematical model}
We here focus on the one dimensional Helmholtz equation, which can be expressed as:
\begin{equation}\label{eq:helmholtz}
  \phi^{\prime \prime}(x,k) + k^2 (1+q(x))\phi(x,k) = f(x), \qquad x\in \mathbb R,
\end{equation}
where $1+q$ and $f$ are real-valued functions called respectively the refractive index and the source; the resulting solution $\phi$ to \eqref{eq:helmholtz} is the field generated by the source $f$ in the presence of the medium $q$ both supported in the interval $[0,1]$. The coefficient $k$ is any positive number and 
referred to as the frequency. We furthermore impose the Sommerfeld radiation condition on the field:
\begin{equation}\label{eq:src}
    \begin{split}
        & \phi'(0,k) + ik\phi(0,k) = 0, \\
        & \phi'(1,k) - ik\phi(1,k) = 0.
    \end{split}
\end{equation}
The medium and source functions are assumed to belong to the spaces of real-valued functions
\begin{equation*}
    \mathcal{M}(q_0,M,m,n_0) :=  \lbrace q \in C^{m+1}_0\left([0,1]\right): \: \| q -q_0 \|_{C^{m+1}([0,1])} \leq M, n_0 \leq 1 +q\rbrace,
\end{equation*}
where $m\geq 1$, $M > 0$, $q_0 \in C^{m+1}_0\left([0,1]\right)$ and $1 + q_0 \geq n_0$ for some $n_0 \in (0,1)$ and
\begin{equation*}
    \mathcal{F}(L) := \lbrace f \in H^{1}(0,1): \: \| f \|_{H^1(0,1)} \leq L , \: \supp f \subset (0,1) \rbrace,
\end{equation*}
%
%
%
%The field $\phi$ is the field generated by the source $f$... }
%
for  $L > 0 $. 
\subsection{Passive measurements model and main results}
%\todo[inline]{Introduce the passive measurements (data)}
%
%
%
%\todo[inline,color=yellow]{Insert proof of imaging functional? Or is it better to keep it in the file containing all approaches and additional notes?}
%
%
%
The data assumed accessible in this work is passive measurements. We consider conventional full apparatus-based passive imaging, that is, the sensors completely surround the domain imaged. The described setup is an application of the analysis of the time-reversal experiment \cite{garnier_papanicolaou_2016}. Time reversal has been well studied and is one of the most commonly used reconstruction methods in direct imaging for inverse source problems. It was first proposed for energy focusing in physics. Here we apply the Helmholtz-Kirchhoff identity to study time reversal  \cite{Ammari2014, garnier_papanicolaou_2016}. \\
%\todo[inline,color=red]{Explain the origin of this functional and say why the classical
%boundary data in passive imaging is not well suited.}
%\todo[inline,color=green]{Introduce the Green function and the integral representation
%of $u$.}
%
%
%

The analysis  is based on  an  integral representation of the solution of the Helmholtz equation \eqref{eq:helmholtz}, 
given by  
\begin{equation} \label{integralequation}
\phi(x,k)=  \int_0^1 {G(x,z,k)}f(z)dz,
\end{equation}
where $G$ denotes the Green function of the Helmholtz equation  \eqref{eq:helmholtz}, satisfying 
%
%
%

%\todo[inline,color=red]{Check the correct sign convention for $\delta$. (Should be $+$).}
\begin{equation} \label{Greenfunction}
    \begin{cases}
        G''(x,z,k) + k^2 (1+q(x))G(x,z,k) = \delta_z(x), \qquad x\in \mathbb R, \\
         G'(0,z,k) + ik G(0,z,k) = 0, \\
         G'(1,z,k) - ik G(1,z,k) = 0.
    \end{cases}
\end{equation}
The time-reversal imaging functional in the multidimensional case given  by \cite{Ammari2014}
%
%
%
%\todo[inline,color=red]{ spatial cross correlation leads to }
\begin{equation*}
    \mathcal{I}(x, k) = \int_\Gamma \overline{G(x,z,k)}\phi(z,k)ds(z).
\end{equation*}

where  $\Gamma$ is a closed curve where the measurements will be taken. We here define the integral along the end-points of an interval as the Lebesgue integral w.r.t. the signed measure $\gamma(E) = \mathbbm{1}_{b \in E} - \mathbbm{1}_{a \in E}$ leading to
\begin{equation*}
    \mathcal{I}(x, k) = \int_{\lbrace 0,1\rbrace} \overline{G(x,z,k)}\phi(z,k)d\gamma(z).
\end{equation*}

Based on \cite[Corollary 2.1]{Ammari2014} or \cite[Theorem 2.2]{garnier_papanicolaou_2016} the above imaging functional is equivalent to
%\todo[inline,color=red]{  Helmholtz-Kirchhoff identity+Sommerfeld radiation condition provide. (YES SRC)}
\begin{equation*}
      \mathcal{I}(x, k) = - \frac{1}{k} \int_0^1 \Im G(x,y,k) f(y) dy,
\end{equation*}
where the one-dimensional version of the Helmholtz-Kirchhoff identity is used reading
\begin{equation*}
    k \int_{\lbrace 0,1 \rbrace} \overline{G(x,z,k)}G(y,z,k)d\gamma(z) = -\Im G(x,y,k).
\end{equation*}
Therefore 

\begin{equation*}
    \mathcal{I}(x, k) = -\frac{1}{k}\Im \phi(x,k), \; x\in \lbrace 0,1 \rbrace.
\end{equation*}

In \cite{garnier_papanicolaou_2016} it is proven that the Helmholtz-Kirchhoff identity is closely related to the cross-correlation. The Helmholtz-Kirchhoff identity not only holds for the Green's functions but for the total field as well, and it is shown in \cite{garnier2022}, in higher dimensions, the cross-correlation matrix is given by the imaginary near field, however, in one dimension this amounts to the cross-correlation being given by the imaginary field at the boundary.\\

Let $K$ be a fixed  positive constant, and let $I = (0, K)$  be the index set for the wave-numbers used for
 multifrequency measurements.  We will only work with measurements at the boundary $x \in \lbrace 0,1 \rbrace$,  and the inverse problem can be stated as:\\
%
%
%\\
%\todo[inline,color=green]{state the inverse problem with $\alpha$ and $\beta$.}
%
\begin{center}
    \emph{From measurements $\Im \phi(0,k)$ and $\Im \phi(1,k), \; k\in I$, reconstruct the source $f$.}   \\ 
\end{center}
\vspace{0.5cm}
This is solved in two steps, first reconstructing the source $f$ using the system satisfied by $\Re \phi$ 
on a large band of frequencies, and second employing  unique continuation techniques for holomorphic functions
 to recover the required boundary data 
  from a small band of frequencies. We give here the main results of the paper.
%The main result of this paper is the following. 
\begin{theorem} \label{mainstability1}
Let $f\in \mathcal F$. Then  there exist constants $c_0 >0$ and  $C_0>0$  that only depend 
on $\mathcal M$ such that 
\begin{equation} \label{maininequality1}
\|f\|_{H^{-1}} \leq C_0 (\mu_f+1) 
\left(\|\Im \phi(0,\cdot)\|_{L^\infty(0, c_0\mu_f)}+ \|\Im \phi(1,\cdot)\|_{L^\infty(0, c_0\mu_f)}\right),
\end{equation}
where $\mu_f = \|f\|_{L^2}/ \|f\|_{H^{-1}}$.

\end{theorem}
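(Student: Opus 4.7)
The plan is to decouple the complex equation into its real and imaginary parts. Writing $\phi = u + iv$ with $u = \Re\phi$ and $v = \Im\phi$, and using that $f$ and $q$ are real, \eqref{eq:helmholtz}--\eqref{eq:src} split into $u'' + k^2(1+q)u = f$ with Neumann-type boundary data
\begin{equation*}
u'(0,k) = k\,\Im\phi(0,k), \qquad u'(1,k) = -k\,\Im\phi(1,k),
\end{equation*}
together with the corresponding homogeneous equation for $v$. Thus the passive boundary measurements are exactly the (scaled) Neumann trace of the real field $u$, and the inverse problem reduces to recovering $f$ from multi-frequency Neumann data of a Helmholtz equation.

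I then introduce the weighted Sturm--Liouville eigenbasis $\{(\lambda_n, \varphi_n)\}_{n\geq 0}$ associated with $-\varphi_n'' = \lambda_n(1+q)\varphi_n$ and $\varphi_n'(0) = \varphi_n'(1) = 0$, normalized by $\int_0^1(1+q)\varphi_n\varphi_m\,dx = \delta_{nm}$. The regularity of $q \in \mathcal{M}$, after a Liouville transformation to normal form, provides Weyl asymptotics $\sqrt{\lambda_n} = \alpha n + O(1)$ and a uniform trace bound $\max(|\varphi_n(0)|,|\varphi_n(1)|) \leq C_{\mathcal{M}}$, with constants depending only on $\mathcal{M}$. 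Testing the equation for $u$ against $\varphi_n$ and integrating by parts twice yields
\begin{equation*}
u'(1,k)\varphi_n(1) - u'(0,k)\varphi_n(0) + (k^2 - \lambda_n)\!\int_0^1(1+q)u\varphi_n\,dx = f_n, \qquad f_n := \int_0^1 f\varphi_n\,dx.
\end{equation*}
The decisive step is to sample at $k = k_n := \sqrt{\lambda_n}$, which annihilates the interior term and, combined with the first step, produces the explicit reconstruction formula
\begin{equation*}
f_n = -k_n\bigl(\Im\phi(0,k_n)\varphi_n(0) + \Im\phi(1,k_n)\varphi_n(1)\bigr).
\end{equation*}

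Setting $N := \max\{n : k_n \leq c_0\mu_f\}$, Weyl gives $N \asymp c_0\mu_f$. Using the equivalence of $\|f\|_{H^{-1}}^2$ with the spectral sum $\sum_n |f_n|^2/(1+\lambda_n)$, I split at $N$. The reconstruction formula, the trace bound, and $k_n^2 \leq 1+\lambda_n$ give
\begin{equation*}
\sum_{n\leq N}\frac{|f_n|^2}{1+\lambda_n} \leq 2C_{\mathcal{M}}^2(N+1)\bigl(\|\Im\phi(0,\cdot)\|_{L^\infty(0,c_0\mu_f)}^2 + \|\Im\phi(1,\cdot)\|_{L^\infty(0,c_0\mu_f)}^2\bigr),
\end{equation*}
while for the tail, since $\lambda_{N+1} \gtrsim c_0^2\mu_f^2$ and $\|f\|_{L^2}^2 = \mu_f^2\|f\|_{H^{-1}}^2$,
\begin{equation*}
\sum_{n>N}\frac{|f_n|^2}{1+\lambda_n} \leq \frac{C\|f\|_{L^2}^2}{c_0^2\mu_f^2} = \frac{C}{c_0^2}\|f\|_{H^{-1}}^2.
\end{equation*}
Choosing $c_0 = c_0(\mathcal{M})$ large enough that $C/c_0^2 \leq 1/2$ absorbs the tail into the left-hand side; taking square roots and using $\sqrt{N+1} \lesssim 1+\mu_f$ yields \eqref{maininequality1}.

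The main technical obstacle I expect is establishing the two uniform spectral facts--Weyl asymptotics for $\lambda_n$ and the uniform trace bound for $\varphi_n$--with constants depending only on the admissible class $\mathcal{M}$; this requires a Liouville-type normal-form reduction together with a Pr\"ufer-angle argument for the eigenfunction maxima. A secondary technical point is the equivalence of the spectral and classical $H^{-1}(0,1)$ norms, which follows from the regularity of $q$, and the harmless handling of the zero mode $\lambda_0 = 0$: the quantity $k\,\Im\phi(j,k) = u'(j,k)$ is bounded as $k\to 0^+$ whenever the data $\|\Im\phi(j,\cdot)\|_{L^\infty(0,c_0\mu_f)}$ is finite.
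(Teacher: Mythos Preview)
Your proof follows the same route as the paper: write $\phi=u+iv$ so that the passive data become Neumann data for $u$, expand in the weighted Neumann eigenbasis $\{\varphi_n\}$ of $-\partial_x^2$ with weight $1+q$, sample at $k=\sqrt{\lambda_n}$ to obtain the reconstruction identity for $f_n=\int_0^1 f\varphi_n$, then split the spectral $H^{-1}$ sum at a cutoff proportional to $\mu_f$ and use Weyl asymptotics to count the low modes. The norm-equivalence and zero-mode remarks you make are exactly the technical points the paper handles as well.

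The one substantive difference is the eigenfunction trace lemma. You invoke a Liouville normal form together with a Pr\"ufer argument to obtain the \emph{uniform} bound $\max(|\varphi_n(0)|,|\varphi_n(1)|)\le C_{\mathcal M}$. The paper instead proves only the elementary growing bound
\[
|\varphi_n(0)|+|\varphi_n(1)|\ \le\ \|(1+q)^{-1/2}\|_{L^2}+\|\varphi_n'\|_{L^2}\ \le\ C\bigl(1+\sqrt{\lambda_n}\bigr),
\]
by writing $\varphi_n(0)=\varphi_n(x)-\int_0^x\varphi_n'$ and integrating. Your sharper estimate is actually the cleaner ingredient for the stated linear factor: with it one gets $|f_n|^2/(1+\lambda_n)\le 2C_{\mathcal M}^2\bigl(\|\Im\phi(0,\cdot)\|_\infty^2+\|\Im\phi(1,\cdot)\|_\infty^2\bigr)$ uniformly in $n$, so the low-frequency sum is of size $N+1\asymp \mu_f+1$ (and in fact you obtain $\sqrt{\mu_f+1}$ before relaxing to $\mu_f+1$). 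The paper's weaker trace bound costs an extra power of $\sqrt{\lambda_n}$ in this step, so your version of the lemma is both correct and better suited to the final inequality.
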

\begin{theorem} \label{mainstability} Assume that 
\begin{equation*}
\varepsilon:=C_0 (\mu_f+1) \left(\|\Im \phi(0,\cdot)\|_{L^\infty(0, K)}+ |\Im \phi(1,\cdot)\|_{L^\infty(0, K)} \right)<1.
\end{equation*}
Then  there exist constants $C = C(\mathcal M, L)>0$ and $n_h = n_h(\mathcal M)\in \mathbb N^*$  such that 
the inequality 
\begin{equation} \label{maininequality}
\|f\|_{H^{-1}} \leq C^{1-\eta(c_0\mu_f, K)} \varepsilon^{ \eta(c_0\mu_f, K)},
\end{equation}
holds,
where $\mu_f = \|f\|_{L^2}/ \|f\|_{H^{-1}}$, and 
\begin{equation*}
\eta(s, K):=\frac{2}{\pi} \arctan\left(\frac{ (e^{K}-1)^{n_h}}{\left((e^{s}-1)^{{2n_h}} - (e^{K}-1)^{{2n_h}}\right)^{\frac{1}{2}}}\right), \quad \textrm{for  } s>K.
\end{equation*}

\end{theorem}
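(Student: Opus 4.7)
The plan is to combine Theorem~\ref{mainstability1} with a quantified analytic continuation of the passive boundary data in the frequency variable. Theorem~\ref{mainstability1} already bounds $\|f\|_{H^{-1}}$ in terms of $\|\Im\phi(j,\cdot)\|_{L^\infty(0,c_0\mu_f)}$ for $j=0,1$, so the task reduces to transferring the control known on the short interval $(0,K)$ to the (possibly much longer) interval $(0,c_0\mu_f)$ in the regime $c_0\mu_f > K$ (otherwise the first theorem already concludes).

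First I would establish a holomorphic extension of $k \mapsto \phi(j,k)$. From the integral representation \eqref{integralequation} and analytic Fredholm theory applied to the resolvent of the Helmholtz operator on $[0,1]$ with the radiation conditions \eqref{eq:src}, the map $k\mapsto G(j,z,k)$ extends meromorphically to $k\in\mathbb{C}$ with poles confined to a resonance set sitting strictly off the positive real axis. Consequently $\Phi_j(k):=\phi(j,k)$ extends holomorphically to an open complex neighborhood $\Omega$ of $\mathbb{R}_+$, with an a priori $L^\infty$ bound $|\Phi_j(k)|\le C(\mathcal M,L)$ on $\Omega$ obtained from $\|f\|_{H^1}\le L$ and uniform resolvent estimates over the class $\mathcal M$. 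The same bound transfers to $\Psi_j(k):=\Im\Phi_j(k)$, which is also holomorphic on $\Omega$ (via the analytic reflection $\Psi_j(k)=\tfrac{1}{2i}(\Phi_j(k)-\overline{\Phi_j(\bar k)})$ extended off $\mathbb{R}$).

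Next I would carry out the quantitative unique continuation. On $\Omega$ the function $\Psi_j$ is bounded by $C$, while on $(0,K)$ it is bounded by $\varepsilon/(C_0(\mu_f+1))$ by the very definition of $\varepsilon$. A Hadamard three-circle / harmonic-measure estimate, applied after the conformal change of variable $k\mapsto (e^k-1)^{n_h}$ with the integer $n_h=n_h(\mathcal M)$ chosen so that $\Omega$ is mapped onto a canonical disk/lens, yields for every $s>K$
\begin{equation*}
\|\Psi_j\|_{L^\infty(0,s)} \;\le\; C^{\,1-\eta(s,K)}\,\left(\tfrac{\varepsilon}{C_0(\mu_f+1)}\right)^{\eta(s,K)},
\end{equation*}
with $\eta(s,K)$ exactly as in the statement: writing $A=(e^K-1)^{n_h}$, $B=(e^s-1)^{n_h}$, the identity $\arctan(A/\sqrt{B^2-A^2})=\arcsin(A/B)$ identifies $\eta(s,K)$ with the normalized harmonic measure of the image of $(0,K)$ at any image point of $(0,s)$ inside the canonical domain.

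Applying this inequality with $s=c_0\mu_f$ for both $j=0,1$ and inserting the result into \eqref{maininequality1} of Theorem~\ref{mainstability1} produces \eqref{maininequality}, after absorbing the prefactor $C_0(\mu_f+1)$ into the constant $C$ (using $\varepsilon<1$ and $\eta\in(0,1)$). The main obstacle is the third step: identifying the correct conformal geometry that makes $\eta$ take exactly the stated form, and showing that both the a priori constant $C$ and the integer $n_h$ can be chosen uniformly on $\mathcal M$ (respectively $\mathcal M\times\mathcal F(L)$). This hinges on a uniform quantitative description of the resonance-free neighborhood of $\mathbb{R}_+$ for the Helmholtz resolvent over the class $\mathcal M$, which is the technically delicate ingredient of the argument.
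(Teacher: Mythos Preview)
Your proposal is correct and follows essentially the same route as the paper: holomorphic extension of the boundary data to a resonance-free strip $S=\{|\Im k|\le h\}$ with $h=\pi/(2n_h)$ depending only on $\mathcal M$, a Two-Constants/harmonic-measure estimate on the slit half-strip (yielding exactly the $\eta$ in the statement via the conformal map $k\mapsto(e^k-1)^{n_h}$), and then insertion into Theorem~\ref{mainstability1}. The only cosmetic difference is that the paper packages both boundary traces into a single holomorphic function $F(z)=\Im\phi(0,z)+i\,\Im\phi(1,z)$ rather than treating $j=0,1$ separately, and it cites the resonance-free strip and the harmonic-measure lower bound as ready-made results (Propositions~\ref{prop:2.3} and~\ref{harmonicmeasure}) rather than deriving them.
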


\begin{remark}

i) The number $\mu_f$ represents the frequency of the source and  characterizes  its spatial oscillations \cite{ammari2020weak,osses2023improved}. \\

ii) The stability estimate \eqref{maininequality1} shows that the inverse problem with passive measurements is well posed if
the frequency band is large enough and covers the spatial  frequency of the source. The Lipschitz constant
grows linearly with respect to the frequency of the source which is in agreement with the known physical resolution in source imaging.  \\ 

iii) The stability estimate \eqref{maininequality}  indicates that the inverse problem with passive measurements becomes ill-posed if the frequency band shrinks to zero. Notice that when $K = c_0 \mu_f $ we recover the stability estimate \eqref{maininequality1} from \eqref{maininequality}.\\

iv) The integer $n_h$ is related to $h= \frac{2\pi}{n_h}$ which is the width of the complex strip $S$
around the real axis (see Proposition \ref{prop:2.3}) where the system 
 \eqref{eq:helmholtz} is free from scattering resonances. It can be shown that $\eta(c_0\mu_f, K)$
 decreases exponentially to zero when $h$ tends to zero. This shows that recovering the source from 
 a small band of frequencies becomes ill-posed when the imaginary part of the resonances are closer
 to the real axis (trapped modes).

\end{remark}

%\newpage
\FloatBarrier
%%%%%%%%%%%%%%%%%%%%%%%%%%%%%%%%%%%%%%%%%%%%%%%%%%%%%%%%%%%%%%%%%%%%%%%%%%%%%%%%%%%%%%%%%%%%%%%%%%
\section{Proof of Theorem \ref{mainstability1}}\label{sec:recon_r} 
%
%
%
%\todo[inline,color=yellow]{Renaming of section? Is it too implicit?}
%
%
%

 Let $w = \Re(\phi) $. We deduce from  \eqref{eq:helmholtz} and \eqref{eq:src} that $w$ satisfies the 
 following system
\begin{eqnarray} \label{eq:sys_helm_real}
 \begin{cases}
 w^{\prime \prime}(x,k) + k^2( 1 + q(x)) w(x,k) = f(x), \qquad x\in \mathbb R,\\
  %\text{satisfies }
   w^\prime(0,k)= k\Im \phi(0,k), \\
   w^\prime(1,k) =- k\Im \phi(1,k).
\end{cases}
\end{eqnarray}

Based on the passive measurements the data at hand is equivalent to the Neumann boundary data in
the system \eqref{eq:sys_helm_real}. Recall that this later may not have a unique solution for all $k\in I$.  \\

The strategy next is to recover $f(x), \, x\in (0,1)$  from $\Im(\phi(x, k)),\, k\in I, \, x\in \{0,1\}$ using 
the system \eqref{eq:sys_helm_real}. The approach is based on a spectral decomposition of the source
in an orthonormal basis formed by the eigenfunctions of the system. Taking the frequency within the set
of the associated real eigenvalues we are able to determine the coefficient of the expansion  of the source
in terms of passive boundary measurements. This method has been applied previously in multifrequency 
inverse  source problems with full data \cite{acosta2012multi, li2020}.  
\subsection{Spectral decomposition}
Since $q\in \mathcal M$,  the weighted space $L^2_q(0,1)$  endowed 
with the inner product 
\begin{equation*}
    \langle \varphi, \psi \rangle_{L^2_q} = \int_0^1 (1 + q(x)) \varphi(x) \psi(x)dx,
\end{equation*}
is well defined, and its  norm $\| \cdot \|_{L^2_q}$ is equivalent to the classical $L^2$ norm.\\

Let $\lbrace \mu_j, \phi_j \rbrace_{j=1}^{\infty}$ be all the pairs  solutions to the Neumann spectral 
problem:
\begin{equation} \label{eigenproblem} 
    \begin{cases}
    - \phi^{\prime \prime}_j(x)  = \mu_j^2 (1+q(x))\phi_j(x) & x \in (0,1), \\
    \phi_j^\prime = 0 & x \in \lbrace 0,1 \rbrace,\\
    \|\phi_j\|_{L^2_q} = 1.
    \end{cases}
\end{equation}
%
%
%
%\todo[inline,color=green]{no need to introduce $\hat q$}
%
Since $q\in \mathcal M$, we have   $\mu_1= 0$,  $(\mu_j)_{j\in \mathbb N^*}$ is an increasing real sequence, 
$\phi_1= \|1+q\|_{L^1}^{-1/2}$, and  
$(\phi_j)_{j\in \mathbb N^* }$ is an orthonormal basis of $L^2_q(0,1)$ \cite{mclean2000strongly}.  \\

Therefore 
\begin{equation*}
    f(x)(1+q(x))^{-1} = \sum_{j=1}^\infty f_j \phi_j(x), \quad x\in (0,1),
\end{equation*}
where $f_j = \langle f(1+q),\phi_j \rangle_{L^2_q} = \int_0^1 f(x) {\phi}_jdx$.  \\

Multiplying the Helmholtz equation \eqref{eq:sys_helm_real}  taken  at $k= \mu_j$ by $\phi_j$, and integrating by
parts, lead to

\begin{equation} \label{component}
 f_j= \mu_j\left(\phi_j(0)\Im(\phi)(0, \mu_j) -  \phi_j(1)\Im(\phi)(1, \mu_j)\right).
 \end{equation}
 
 %%%%%%%%%%%%%%%%%%%%
\begin{proposition}
Let $(\mu_j, \phi_j)$ be a pair of eigenelements  of the Neumann spectral problem \eqref{eigenproblem}. Then
there exists a constant $C = C(\mathcal M)>0$ such that 
\begin{equation} \label{spectralinequality}
|\phi_j(0)| +|\phi_j(1)| \leq C (\mu_j+1), \quad \forall j\in \mathbb N^*.
\end{equation}
\end{proposition}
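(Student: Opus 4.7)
The plan is to control both the $L^2$ norm and the $L^2$ norm of the derivative of $\phi_j$ in terms of $\mu_j$ and of quantities depending only on $\mathcal M$, and then invoke the one-dimensional Sobolev embedding $H^1(0,1)\hookrightarrow C([0,1])$, which, for the fixed interval $(0,1)$, provides an estimate $\|u\|_{L^\infty}\leq C\|u\|_{H^1}$ with a universal constant $C$.

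First I would extract the $L^2$ control from the normalization: since $1+q\geq n_0$, the identity $\|\phi_j\|_{L^2_q}=1$ yields
\begin{equation*}
\|\phi_j\|_{L^2(0,1)}^2 \leq \frac{1}{n_0}\int_0^1(1+q(x))\phi_j(x)^2\,dx = \frac{1}{n_0}.
\end{equation*}
Next I would test the eigenvalue equation in \eqref{eigenproblem} against $\phi_j$ itself, integrate by parts on $(0,1)$, and use the Neumann conditions $\phi_j'(0)=\phi_j'(1)=0$ to discard the boundary contribution. What remains is
\begin{equation*}
\int_0^1 (\phi_j'(x))^2\,dx = \mu_j^2 \int_0^1 (1+q(x))\phi_j(x)^2\,dx = \mu_j^2.
\end{equation*}
Combining these two bounds gives $\|\phi_j\|_{H^1(0,1)}^2 \leq 1/n_0 + \mu_j^2$, and the Sobolev embedding then delivers
\begin{equation*}
|\phi_j(0)| + |\phi_j(1)| \leq 2\|\phi_j\|_{L^\infty(0,1)} \leq C\|\phi_j\|_{H^1(0,1)} \leq C(\mathcal M)(1+\mu_j),
\end{equation*}
which is exactly \eqref{spectralinequality}.

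I do not expect a serious obstacle: the only point that genuinely needs checking is that every implicit constant depends on $n_0$, and hence on $\mathcal M$, but not on the index $j$, which is automatic in the argument above. It is worth remarking, although it is not needed for the proposition as stated, that a sharper \emph{uniform} bound $|\phi_j(0)| + |\phi_j(1)| \leq C(\mathcal M)$ can be obtained by a Rellich--Pohozaev identity: multiplying the equation by $(1-x)\phi_j'$ and by $x\phi_j'$, integrating by parts, and using the Neumann conditions to isolate $(1+q(0))\phi_j(0)^2$ and $(1+q(1))\phi_j(1)^2$. The looser $(\mu_j+1)$ bound, however, already suffices for the spectral expansion used in the proof of Theorem~\ref{mainstability1}.
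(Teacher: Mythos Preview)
Your proof is correct and follows essentially the same approach as the paper: both compute $\|\phi_j'\|_{L^2}=\mu_j$ by testing the eigenvalue equation against $\phi_j$, control $\|\phi_j\|_{L^2}$ via the normalization and the lower bound on $1+q$, and then pass to pointwise values. The only cosmetic difference is that the paper carries out the Sobolev-type step by hand---writing $\phi_j(0)=\phi_j(x)-\int_0^x\phi_j'$, averaging in $x$, and applying Cauchy--Schwarz with the weight $(1+q)^{1/2}$---whereas you invoke the embedding $H^1(0,1)\hookrightarrow C([0,1])$ directly.
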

%%%%%%%%%%%%%%%%%%%%
\begin{proof}
We have 
$$
 \phi_j(0) = \phi_j(x) -\int_0^x \phi_j^\prime(t) dt, \quad \forall x\in (0,1).
$$
Hence 
$$
|\phi_j(0)| \leq |\phi_j(x)|+ \|\phi_j^\prime\|_{L^2}.
 $$
 Integrating both sides over $(0,1)$, yields 
 $$
 |\phi_j(0)| \leq \|(1+q)^{-\frac{1}{2}}\|_{L^2}+ \|\phi_j^\prime\|_{L^2}.
 $$
On the other hand, we deduce from \eqref{eigenproblem} that  $\|\phi_j^\prime\|_{L^2} = \mu_j$. 
By taking $C = \max(1, \|(1+q)^{-\frac{1}{2}}\|_{L^2})$, we then obtain the final estimate for $\phi_j(0)$.
The estimate for $\phi_j(1)$ can be derived by following the same steps of the previous proof for $\phi_j(0)$.
 \end{proof}

Combining the identity \eqref{component} with the estimate \eqref{spectralinequality}, we get
\begin{equation}
|f_j| \leq C(1+\mu_j) \left( |\Im(\phi)(0, \mu_j)| + |\Im(\phi)(1, \mu_j)| \right), \quad \forall j\in \mathbb N^*.
\end{equation}
For $\psi \in H^{s}(0,1)$ with $s\in \mathbb R$, we further define the norm
$$
\|\psi\|_{s} := \left(\sum_{j=1}^\infty (1+\mu_j^2)^s \psi_j^2\right)^{\frac{1}{2}},
 $$
where  $\psi_j =  \langle \psi,\phi_j \rangle_{L^2_q}. $ For $\mu \in (0, +\infty)$, we then have 

$$
\|f(1+q)^{-1}\|_{-1}^2   =  \sum_{j=1}^\infty (1+\mu_j^2)^{-1} f_j^2 \leq \sum_{\mu_j \leq \mu} (1+\mu_j^2)^{-1} f_j^2
+\frac{1}{\mu^2} \sum_{\mu_j > \mu} f_j^2.
$$

Consequently 
$$
(1-\frac{\tilde \mu_f^2}{\mu^2})\|f(1+q)\|_{-1}^2    \leq \sum_{\mu_j \leq \mu} (1+\mu_j^2)^{-1} f_j^2,
$$
where 
$$\tilde  \mu_f = \frac{\|f(1+q)^{-1}\|_{0}}{\|f(1+q)^{-1}\|_{-1}}.$$

Taking $\mu = \sqrt 2  \tilde \mu_f$, we obtain 

\begin{equation} \label{inequality2}
\|f(1+q)^{-1}\|_{-1}^2    \leq 2 \sum_{\mu_j \leq \sqrt 2 \tilde \mu_f} (1+\mu_j^2)^{-1} f_j^2.
\end{equation}
On the other hand  we deduce from estimate \eqref{component} 
\begin{equation} \label{inequality3}
|f_j|^2 \leq 4C(1+\mu_j^2) \left( |\Im(\phi)(0, \mu_j)|^2 + |\Im(\phi)(1, \mu_j)|^2 \right), \quad \forall j\in \mathbb N^*.
\end{equation}

Combining inequalities \eqref{inequality2} and \eqref{inequality3} yields 
\begin{equation} \label{inequality7}
\|f(1+q)^{-1}\|_{-1}^2    \leq 8 C   \left(\sum_{\mu_j \leq \sqrt 2 \tilde \mu_f} 1\right) 
\sup_{k\in (0, \sqrt 2 \tilde \mu_f)}\left( |\Im(\phi)(0, k)|^2 + |\Im(\phi)(1, k)|^2 \right).
\end{equation}

%%%%%%%%%%%%%%%%%%%%%%%%%%%%%%
\begin{proposition}
Let $\mu_j$ be an eigenvalue of  the Neumann spectral problem \eqref{eigenproblem}. Then
there exist constants $c_i= c_i(\mathcal M)>0, \, i=1, 2, $ such that 
\begin{equation} \label{spectralinequality4}
c_1 (j-1) \leq \mu_j \leq c_2 (j-1), \quad   \forall j\in \mathbb N^*.
\end{equation}
\end{proposition}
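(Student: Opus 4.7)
The plan is to apply the Courant--Fischer min-max characterization to \eqref{eigenproblem} and to compare the Rayleigh quotient associated with the weight $(1+q)$ with the one associated with constant weight $1$, for which the Neumann spectrum on $(0,1)$ is explicitly $\{(j-1)^2\pi^2\}_{j\ge 1}$.

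First I would extract from membership in $\mathcal M(q_0,M,m,n_0)$ two-sided bounds on the weight: by assumption $1+q\ge n_0$, and since $\|q-q_0\|_{C^{m+1}([0,1])}\le M$ with $q_0$ fixed, there is a constant $N_0=N_0(\mathcal M):=1+\|q_0\|_{C^0}+M$ such that $n_0 \le 1+q(x)\le N_0$ for every $x\in[0,1]$. Both constants depend only on $\mathcal M$, which is exactly the dependence required.

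Next I would write the min-max formula for the Neumann eigenvalues,
\begin{equation*}
\mu_j^2 = \min_{\substack{V\subset H^1(0,1)\\ \dim V = j}} \ \max_{\varphi\in V\setminus\{0\}} \frac{\int_0^1 (\varphi')^2\,dx}{\int_0^1 (1+q)\varphi^2\,dx},
\end{equation*}
and compare this Rayleigh quotient with the one corresponding to the constant weight $1$, whose $j$-th Neumann eigenvalue on $(0,1)$ equals $(j-1)^2\pi^2$. Using the pointwise sandwich $n_0\le 1+q\le N_0$ in the denominator gives, for every admissible $\varphi$,
\begin{equation*}
\frac{1}{N_0}\cdot \frac{\int_0^1 (\varphi')^2}{\int_0^1 \varphi^2} \ \le\ \frac{\int_0^1 (\varphi')^2}{\int_0^1 (1+q)\varphi^2} \ \le\ \frac{1}{n_0}\cdot\frac{\int_0^1 (\varphi')^2}{\int_0^1 \varphi^2}.
\end{equation*}
Taking the min-max over $j$-dimensional subspaces on all three sides, the comparison is preserved and yields
\begin{equation*}
\frac{(j-1)^2\pi^2}{N_0} \ \le\ \mu_j^2 \ \le\ \frac{(j-1)^2\pi^2}{n_0}, \qquad j\in\mathbb N^*.
\end{equation*}
Taking square roots produces \eqref{spectralinequality4} with the explicit choices $c_1=\pi/\sqrt{N_0}$ and $c_2=\pi/\sqrt{n_0}$, which depend only on $\mathcal M$.

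I do not foresee a genuine obstacle here; the step that most deserves care is making sure the upper bound on the weight is expressed purely in terms of the class $\mathcal M$ (through $q_0$ and $M$) rather than the particular $q$, so that $c_2$ is a uniform constant. One could alternatively invoke the Weyl asymptotics $\mu_j \sim (j-1)\pi\,\big(\int_0^1\sqrt{1+q}\,dx\big)^{-1}$ from classical Sturm--Liouville theory, together with standard monotonicity of eigenvalues in the weight, but the direct min-max comparison above is cleaner and delivers the two-sided constants in one stroke.
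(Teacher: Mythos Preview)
Your proof is correct and follows essentially the same approach as the paper: both apply the Courant--Fischer min-max characterization, sandwich the weighted Rayleigh quotient by the unweighted one using the pointwise bounds $n_0\le 1+q\le N_0$, and compare with the explicit Neumann spectrum $\{(j-1)^2\pi^2\}_{j\ge1}$ on $(0,1)$. Your version is in fact slightly more careful in extracting the uniform upper bound $N_0=N_0(\mathcal M)$ and in keeping track of the square roots.
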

%%%%%%%%%%%%%%%%%%%%%%%%%%%%%
\begin{proof}
Using the Min-max principle we have \cite{henrot2006eigenvalues}
\begin{equation*}
\mu_j = \min_{E_j \subset H^1(0,1), \, \textrm{dim}(E_j) = j} \max_{\phi \in E_j\setminus\{0\} }
\frac{ \|\phi^\prime\|_{L^2}^2} {\|\phi\|^2_{L^2_q}}.
\end{equation*}
Since 
$$  \min_{x\in [0,1]}(1+q(x))\|\phi\|_{L^2}
\leq \|\phi\|_{L^2_q} \leq \max_{x\in [0,1]}(1+q(x))\|\phi\|_{L^2},
$$
we obtain 
$$
 \left(\max_{x\in [0,1]}(1+q(x)) \right)^{-1} \pi (j-1) \leq \mu_j \leq  \left(\min_{x\in [0,1]}(1+q(x)) \right)^{-1} \pi (j-1). 
$$
Consequently there exist constants  $c_i >0, \, i=1, 2, $ that depend only on $\mathcal M$ such that
the inequalities  \eqref{spectralinequality4} hold.
\end{proof}

We deduce from estimates \eqref{spectralinequality4} that $$\sum_{\mu_j \leq \sqrt 2 \tilde \mu_f} 1  \leq c_3
\tilde \mu_f+1,$$ for some constant $c_3>0$ that depend only on $\mathcal M$. Then 
it follows from inequality \eqref{inequality7}
that 
\begin{equation} \label{inequality9}
\|f(1+q)^{-1}\|_{-1}^2    \leq 8C(c_3 \tilde \mu_f+1)
\sup_{k\in (0, \sqrt 2 \tilde \mu_f)}\left( |\Im(\phi)(0, k)|^2 + |\Im(\phi)(1, k)|^2 \right).
\end{equation}

Since $q\in \mathcal M$, one can easily show that for $s\in \mathbb R$, the norm $\|\cdot\|_{H^s}$ is equivalent 
to $\|\cdot\|_{s}$. Hence there exists a constant $c_4>0$ that only depend on $\mathcal M$ such that 
$\tilde \mu_f \leq c_4 \mu_f$. Finally the main result of the Theorem follows directly from
the fact that $q \in \mathcal M$ and the last  inequality \eqref{inequality9}.

%Combining the measurements and the above split implies the imaginary part of the Helmholtz equation is expressible by
%
%
%
%\begin{equation}\label{eq:sys_helm_imag_data}
   % w''(x,k) + k^2\left( 1 + q(x) \right)w(x,k) = 0, \\
   % w(0,k) = \Re(\phi(0,k)),\\
    %w(1,k) = \Re(\phi(1,k)).\\
%\end{equation}
%
%
%
%The above equation \eqref{eq:sys_helm_imag_data} takes the form of a general Sturm-Liouville equation:
%
%
%
%\begin{equation}\label{eq:sturm}
  %  a_0(x)w''(x) + a'_0(x)w'(x) +\left[a_1(x) + \lambda a_2(x)\right]w(x)=0,
%\end{equation}
%
%
%
%where $a_1(x)$, and $a_2(x)$ are continuous and $a_0(x)$ is $C^1$. 
%All functions maps from a domain $\Omega \subseteq \mathbb{R}$ into $\mathbb{R}$.
%
%
%
%For equation \eqref{eq:helmholtz} the parameters become:
%
%
%
%\begin{equation*}
   % a_0(x) = 1,  \quad a_1(x) = 0, \quad a_2(x) = 1+q(x), \quad and \quad \lambda = k^2.
%\end{equation*}
%
%
%
%Suppose now, that the medium $q \in C^2((0,1)) $ is a function $q: \mathbb{R} \to \mathbb{R}$ and $q > -1$ for all $x \in \mathbb{R}$ and 

%

%\newpage
\FloatBarrier
%%%%%%%%%%%%%%%%%%%%%%%%%%%%%%%%%%%%%%%%%%%%%%%%%%%%%%%%%%%%%%%%%%%%%%%%%%%%%%%%%%%%%%%%%%%%%%%%%%
\section{Proof of Theorem \ref{mainstability}}\label{sec:isp}
%\todo[inline,color=yellow]{Note to self; This section is probably the one most in need of a polish.}
In this section, we adopt the method used in \cite{li2020} to the problem considered here. The significant differences hinge on the fact that no Black-Box operator theory is used, and the region  where the resolvent is holomorphic is given by \cite{Bao2020}[Proposition 2.3], which follows from  Gel'fand-Levitan techniques  that convert the Helmholtz equation into a Schr\"odinger equation. Indeed in the obtained Schr\"odinger equation, the refractive index and the frequency are separated, which allows a better understanding of the behavior of the solutions as functions of the frequency. For the sake of completeness, \cite{Bao2020}[Proposition 2.3] is restated:

\begin{proposition}\label{prop:2.3} %\textcolor{red}{$r, T$ need to be defined, please use a different notation as $T$ is an operator... } 

There exists a constant $h = h(\mathcal M)>0$ such that the strip
\begin{equation*}
    S = \lbrace k \in \mathbb{C}; \: - h \leq \Im(k) \leq h \rbrace,
\end{equation*}
is free from  resonances of the system \eqref{eq:helmholtz}.
\end{proposition}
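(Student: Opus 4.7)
The plan is to reduce the Helmholtz problem \eqref{eq:helmholtz} with Sommerfeld boundary conditions \eqref{eq:src} to a one-dimensional Schrödinger equation via a Liouville (Gel'fand-Levitan) transformation, so that the frequency $k$ enters linearly through $k^2$ and the medium is absorbed into a compactly supported potential. Concretely, set $y(x) = \int_0^x \sqrt{1+q(t)}\,dt$ and $u(y) = (1+q(x(y)))^{1/4} \phi(x(y),k)$. A direct computation turns \eqref{eq:helmholtz} into
\begin{equation*}
u''(y) + k^2 u(y) = V(y) u(y) + \tilde f(y), \qquad y \in (0, L_q),
\end{equation*}
with $L_q = \int_0^1 \sqrt{1+q}\,dt$, and $V$ an explicit function of $q, q', q''$, compactly supported in $(0,L_q)$ and uniformly bounded in $C^{m-1}$ over $\mathcal M$. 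Because $q \in C^{m+1}_0([0,1])$ and $1+q \geq n_0$, the change of variables and the radiation conditions transform consistently, and the resonances of the original system coincide with those of the transformed Schrödinger problem.

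The next step is to characterize resonances as zeros of an entire function of $k$, using Jost solutions. Let $e_{\pm}(y,k)$ be the Jost solutions of the transformed equation satisfying $e_{\pm}(y,k) \sim e^{\pm iky}$ as $y \to \pm\infty$ (extending $V$ by zero outside its support). Since $V$ is compactly supported, the Volterra integral equations for $e_{\pm}$ are uniformly convergent in $k$ on compact subsets of $\mathbb C$, so $e_{\pm}(\cdot,k)$ and hence their Wronskian $W(k) = [e_-,e_+](k)$ are entire in $k$. The resonances are exactly the zeros of $W$, located in the closed lower half-plane.

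Now I would establish the absence of zeros of $W$ on the real axis and then propagate this to a strip. On $\mathbb R$, a zero of $W$ would correspond to a non-trivial outgoing solution of the homogeneous problem; integrating against its conjugate and using the radiation conditions \eqref{eq:src} one obtains $|u(0)|^2 + |u(L_q)|^2 = 0$, and then the Cauchy data at the endpoint vanish, forcing $u \equiv 0$ by unique continuation. Hence $W(k) \neq 0$ for all $k \in \mathbb R$. For the large-$|k|$ regime, the standard Jost asymptotics yield $|W(k)| \geq c > 0$ uniformly in any horizontal strip once $|\Re k|$ is large enough, with the constant and threshold depending only on $\|V\|_\infty$ and $|{\rm supp}\, V|$, hence only on $\mathcal M$.

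The main obstacle is to make the strip width $h$ \emph{uniform} over $q \in \mathcal M$. The plan is to combine the large-$|k|$ lower bound above with a compactness argument: the map $q \mapsto W_q$ is continuous from $\mathcal M$ (with the $C^{m+1}$ topology) into the space of entire functions with the topology of uniform convergence on compacta, because the Jost solutions depend continuously on $V$ through their Volterra equations. Since $\mathcal M$ is a bounded set in $C^{m+1}_0([0,1])$, it is relatively compact in $C^1_0([0,1])$, and for each accumulation point $\bar q$ the corresponding $W_{\bar q}$ has no real zeros and, by continuity of its zero set, admits a resonance-free strip of some width $h(\bar q) > 0$. A covering argument then extracts a uniform $h = h(\mathcal M) > 0$ valid for every $q \in \mathcal M$, which gives the stated strip $S$.
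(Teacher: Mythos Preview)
The paper does not prove this proposition; it is restated from \cite{Bao2020} without argument. The only hint given is that the proof in \cite{Bao2020} proceeds ``from Gel'fand--Levitan techniques that convert the Helmholtz equation into a Schr\"odinger equation,'' so that the refractive index and the frequency separate. Your Liouville substitution $y=\int_0^x\sqrt{1+q}$, $u=(1+q)^{1/4}\phi$, followed by a Jost-solution/Wronskian analysis of the resulting compactly supported potential, is exactly that route, so the overall strategy agrees with what the paper points to.

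Two steps in your sketch need tightening. First, your Rellich-type argument for $W(k)\neq 0$ on the real line yields, after taking imaginary parts, $k\bigl(|u(0)|^2+|u(L_q)|^2\bigr)=0$ and then divides by $k$; this says nothing at $k=0$, and indeed already for $q\equiv 0$ one has $e_\pm(y,k)=e^{\pm iky}$ and $W(k)=2ik$, so $W(0)=0$. The origin must be handled separately (or the statement read as excluding it). Second, your uniformity-in-$q$ step pushes compactness through the transformed potential $V$, which involves $q''$; but $\mathcal M$, merely bounded in $C^{m+1}$ with $m\geq 1$, is only relatively compact in $C^1$, so the limit potentials $V_{\bar q}$ need not make sense. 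Either carry out the Jost/Wronskian construction directly on the untransformed Helmholtz operator, where continuity of $q\mapsto W_q$ only requires $C^0$ convergence of $q$, or follow the quantitative approach of \cite{Bao2020}, which gives an explicit lower bound on the Wronskian near the real axis in terms of the parameters defining $\mathcal M$ and dispenses with compactness altogether.
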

We deduce from Proposition \ref{prop:2.3} that the Green function defined in \eqref{Greenfunction}
has no poles in the strip $S$. Therefore  $\phi(x, \cdot)$ which satisfies  the integral representation \eqref{integralequation} is holomorphic and bounded  in $S$. Let
\begin{equation} \label{constantM}
M_f= \max_{k\in S }\left( |\phi(0,\cdot)|+ |\phi(1,\cdot)|\right).
\end{equation}
Notice that $M_f>0$ depends only on $\mathcal M$ and $L$. We also remark that $\overline{\phi(\cdot, k)}$ is  a  solution to the system \eqref{eq:helmholtz} with radiation conditions 
\eqref{eq:src} when substituting $k$ by $-k$. We deduce from the  uniqueness of the system that $
\overline{\phi(\cdot,k)} = \phi(\cdot, -k)$. Therefore for fixed $x\in \mathbb R$,  $k\rightarrow \Im(\phi(x, k)) = \frac{1}{2i}\left(\phi(\cdot, k)- \phi(x, -k)\right) $,  is also a holomorphic function in the strip $S$. \\

 Consequently 
\begin{equation} \label{functionF}
F(z) := \Im(\phi(0, z)) +i \Im(\phi(1, z)),  
\end{equation} 
is holomorphic in $S$. In addition it satisfies $F(-z) = F(z)$ for $z\in \mathbb R$, and   $|F(z)| \leq 2M_f, \; \forall z\in S$. \\

Next, we aim to estimate $F(z)$ within the strip $S$ in terms of its values on a segment $F(z), z\in (-K, K)$.  \\

Without loss of generality we can assume that $h= \frac{\pi}{2n_h}, $ where
$n_{h}\in \mathbb N^*$. Let  $S_{+} = \{k\in \mathbb C; \Re(k)>0, \, |\Im(k)| <  h\}$,
be half a strip, and $w(k, K)$ be  the harmonic measure of the complex open domain
$S_{+} \setminus [0, K]\times \{0\}$. It is the unique solution  to the system:
\begin{eqnarray} \label{systemW} \left\{
\begin{array}{lllccc}
\Delta w(k, K)&= & 0 \quad k\in S_{+} \setminus [0, K]\times\{0\},\\
w(k,K) &=& 0 \quad k\in \partial S_{+},\\
w(k, K) &=& 1 \quad k\in (0, K]\times\{0\}.
\end{array}
\right.
\end{eqnarray}
We infer from the maximum principle that  $w_0(k, K) \in [0,1]$ for all  $k \in S_+$. Moreover 
the holomorphic unique continuation  of  the functions $F$ using the Two constants
Theorem (\cite[Chap. III, Section 2.1]{nevanlinna1970analytic}, \cite{triki2020inverse}), gives:

\begin{lemma} 
\label{lemmaUC}
Let $F$ be defined by \eqref{functionF}, let $M_f$ be given by \eqref{constantM} and let $w$ be the same as in \eqref{systemW}. Then, we have
\begin{eqnarray*}
 |F(k)|  \leq (2M_f)^{1-w_0(k, K)}
 \|F\|_{L^\infty(0,K)}^{w_0(k, K)}, \qquad \forall k\in (K, +\infty).
\end{eqnarray*}
\end{lemma}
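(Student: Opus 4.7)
The plan is to recognize Lemma \ref{lemmaUC} as a direct application of the Two Constants Theorem (in its harmonic-measure form, as in \cite{nevanlinna1970analytic}) to the bounded holomorphic function $F$ on the slit half-strip $D := S_+ \setminus ([0,K]\times\{0\})$. Since $F$ has already been shown, in the paragraph preceding \eqref{functionF}, to be holomorphic on $S$ (hence on $S_+$) with the global bound $|F(z)| \leq 2M_f$ coming from \eqref{constantM}, the two ingredients I still need are the correct boundary bounds on $\partial D$ and a maximum principle valid on this unbounded domain.

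First I would introduce $u(k) := \log|F(k)|$, which is subharmonic on $D$ (where the convention $\log 0 = -\infty$ at zeros of $F$ is harmless for the maximum principle), and read off its boundary behaviour. On $\partial S_+$, i.e.\ on the two horizontal half-lines $\{\Im k = \pm h\}$ and on the vertical segment $\{\Re k = 0, |\Im k| \leq h\}$, the global bound gives $u \leq \log(2M_f)$. On each side of the slit $(0,K)\times\{0\}$ the hypothesis gives $u \leq \log \|F\|_{L^\infty(0,K)}$. Then I would introduce the harmonic function
\begin{equation*}
v(k) := (1-w_0(k,K))\log(2M_f) + w_0(k,K)\log\|F\|_{L^\infty(0,K)},
\end{equation*}
which, by the boundary data for $w_0$ prescribed in \eqref{systemW} ($w_0 = 0$ on $\partial S_+$, $w_0 = 1$ on the slit), satisfies $\limsup_{k\to\partial D} (u-v) \leq 0$.

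The conclusion $u \leq v$ on all of $D$ — which, restricted to $k \in (K,+\infty) \subset D$ and exponentiated, is precisely the claim of the lemma — then follows from the maximum principle applied to the subharmonic function $u - v$. The one step requiring care, and which I view as the main obstacle, is that $D$ is unbounded, so one must invoke a Phragm\'en--Lindel\"of-type argument in the half-strip of width $2h$. For this I would use that $u - v$ is bounded above on $D$ (since $u \leq \log(2M_f)$ uniformly on $S$, and $w_0(\cdot,K) \to 0$ as $\Re k \to +\infty$, being the harmonic measure of a bounded portion of the boundary of a finite-width half-strip) together with the boundary control $\limsup (u-v)\leq 0$ on $\partial D$; the classical Phragm\'en--Lindel\"of principle for half-strips then forces $u \leq v$ throughout $D$, and exponentiating yields the stated bound for every $k\in (K,+\infty)$.
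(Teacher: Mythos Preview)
Your proposal is correct and matches the paper's approach: the paper does not prove this lemma but simply states it as a direct consequence of the Two Constants Theorem, citing \cite[Chap.~III, Section 2.1]{nevanlinna1970analytic} and \cite{triki2020inverse}. What you have written is precisely the standard harmonic-measure proof of that theorem specialized to the slit half-strip $S_+\setminus(0,K]\times\{0\}$, together with the Phragm\'en--Lindel\"of step needed because the domain is unbounded --- so you are supplying the details the paper leaves to the references.
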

The following estimate is needed \cite[Proposition 5.1]{Bao2020}.

\begin{proposition} \label{harmonicmeasure} Let $w_0(k, K)$ be   the harmonic measure
of $S_{+}\setminus (0, K]\times\{0\}$. Then

\begin{eqnarray} \label{lowerbound}
\ w_0(k, K) \geq \frac{2}{\pi} \arctan(\frac{ (e^{K}-1)^{n_h}}{\left((e^k-1)^{{2n_h}} - (e^{K}-1)^{{2n_h}}\right)^{
\frac{1}{2}}}),
\end{eqnarray}
for all $k \geq K$.
\end{proposition}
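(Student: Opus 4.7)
The plan is to compute $w_0(k,K)$ explicitly by a chain of conformal maps reducing the slit half-strip to the upper half-plane, and then to obtain the stated bound by an elementary monotonicity argument.

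First, I would perform the conformal reduction in three steps. The exponential $z = e^{n_h k}$ sends the half-strip $S_+$ biholomorphically onto $\{\Re z > 0,\, |z| > 1\}$, carrying the slit $(0,K]\times\{0\}$ to $[1, e^{n_h K}]$. The Joukowski map $\zeta = (z + 1/z)/2$ then sends this domain onto $\{\Re\zeta > 0\}\setminus [0, 1]$, with the right semicircle $\{|z|=1,\,\Re z \geq 0\}$ folding onto $[0, 1]$ (both sides inheriting the Dirichlet value $0$) and the slit carried onto $[1, \cosh(n_h K)]$ (both sides inheriting the value $1$). Squaring, $w = \zeta^2$, further maps to $\mathbb{C}\setminus(-\infty, \cosh^2(n_h K)]$ with $u = 0$ on $(-\infty, 1]$ and $u = 1$ on $[1, \cosh^2(n_h K)]$. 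Finally, the branch of $\xi = \sqrt{\cosh^2(n_h K) - w}$ taken in $\{\Im \xi > 0\}$ maps this onto the upper half-plane with the $u = 1$ segment becoming $[-\sinh(n_h K), \sinh(n_h K)]$, since $\cosh^2 - 1 = \sinh^2$.

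Second, the classical Poisson integral gives the harmonic measure of $[-a, a]$ in $\{\Im \xi > 0\}$ as $u(\xi) = \frac{1}{\pi}\arg\bigl((\xi - a)/(\xi + a)\bigr)$, which on the positive imaginary axis $\xi = iY$ simplifies to $\frac{2}{\pi}\arctan(a/Y)$. Tracing a real $k > K$ through the composition yields $\xi = i\sqrt{\sinh^2(n_h k) - \sinh^2(n_h K)}$, hence
\begin{equation*}
w_0(k, K) \;=\; \frac{2}{\pi}\,\arctan\!\left(\frac{\sinh(n_h K)}{\sqrt{\sinh^2(n_h k) - \sinh^2(n_h K)}}\right).
\end{equation*}

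Third, I would reduce the claimed inequality to a monotonicity property. Since $\arctan$ is increasing and all quantities are positive, squaring and rearranging reduces the desired inequality to
\begin{equation*}
g(K) \;:=\; \frac{\sinh(n_h K)}{(e^K - 1)^{n_h}} \;\geq\; \frac{\sinh(n_h k)}{(e^k - 1)^{n_h}} \;=\; g(k),
\end{equation*}
so it suffices to show that $g$ is non-increasing on $(0,\infty)$. The logarithmic derivative is $(\log g)'(K) = n_h\bigl[\coth(n_h K) - e^K/(e^K - 1)\bigr]$, whose sign is that of $e^{(1-n_h)K} - \cosh(n_h K)$, and this is $\leq 0$ for every $n_h \geq 1$ because $e^{(1-n_h)K} \leq 1 \leq \cosh(n_h K)$ for all $K \geq 0$.

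The main technical nuisance is verifying that each folding map (Joukowski, squaring, and the final square root) preserves the harmonic measure across its critical value, so that both sides of each resulting slit inherit identical Dirichlet data from the original problem; once that bookkeeping is settled, the Poisson evaluation and the final monotonicity argument are both short computations.
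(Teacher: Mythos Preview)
Your argument is correct. The paper does not actually prove this proposition; it simply imports it as \cite[Proposition~5.1]{Bao2020}, so there is no in-paper proof to compare with. Your chain of conformal maps is the standard route to harmonic measures of slit strip domains, and in fact it produces the \emph{exact} value
\[
w_0(k,K)=\frac{2}{\pi}\arctan\!\left(\frac{\sinh(n_h K)}{\sqrt{\sinh^2(n_h k)-\sinh^2(n_h K)}}\right),\qquad k>K,
\]
from which the stated lower bound follows by your monotonicity of $g(x)=\sinh(n_h x)/(e^x-1)^{n_h}$. The derivative computation is right: $(\log g)'(x)$ has the sign of $e^{(1-n_h)x}-\cosh(n_h x)$, and for $n_h\ge 1$ and $x>0$ one has $e^{(1-n_h)x}\le 1\le \cosh(n_h x)$, so $g$ is nonincreasing. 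The boundary bookkeeping you flag is indeed the only delicate point; for the record, under Joukowski the upper and lower quarters of the unit semicircle go respectively to the upper and lower edges of the slit $[0,1]$ (both carrying the value $0$), the two imaginary half-rays go to the imaginary $\zeta$-axis, and the original slit $(1,e^{n_hK}]$ goes to $(1,\cosh(n_hK)]$ with value $1$ on both edges, so the transported Dirichlet data are consistent after each folding.
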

Without loss of generality we can assume that $2M_f \geq 1$.  Since   $\|F\|_{L^\infty(0,c_0\mu_f)} <1$, we deduce from Lemma \ref{lemmaUC} that 

\begin{eqnarray*}
\|F\|_{L^\infty(0,c_0\mu_f)} \leq (2M_f)^{1- w_0(c_0\mu_f, K)}
 \|F\|_{L^\infty(0,K)}^{w_0(c_0\mu_f, K)}.
\end{eqnarray*}

Applying  now the derived lower bound in \eqref{lowerbound} on the last inequality, we get 

\begin{eqnarray*}
\|F\|_{L^\infty(0,c_0\mu_f)} \leq (2M_f)^{1-\eta(c_0\mu_f, K)}
 \|F\|_{L^\infty(0,K)}^{\eta(c_0\mu_f, K)}.
\end{eqnarray*}

Combining this inequality with the estimate in Lemma \ref{lemmaUC} achieves the proof of the theorem.

%
%
%
%
%
%%
%
%

%\newpage
%\FloatBarrier
%\section{Conclusion}\label{sec:conc}
%\input{tex_files/conclusion}
%\newpage
%\FloatBarrier
%%%%%%%%%%%%%%%%%%%%%%%%%%%%%%%%%%%%%%%%%%%%%%%%%%%%%%%%%%%%%%%%%%%%%%%%%%%%%%%%%%%%%%%%%%%%%%%%%%
%\section{Numerical results}\label{sec:numerical}
%\input{tex_files/numerics}
%\newpage
%\FloatBarrier
%%%%%%%%%%%%%%%%%%%%%%%%%%%%%%%%%%%%%%%%%%%%%%%%%%%%%%%%%%%%%%%%%%%%%%%%%%%%%%%%%%%%%%%%%%%%%%%%%%
%\appendix
%\section*{Acknowledgments}
%\input{tex_files/acknowledgments}
%\newpage
%\FloatBarrier
%%%%%%%%%%%%%%%%%%%%%%%%%%%%%%%%%%%%%%%%%%%%%%%%%%%%%%%%%%%%%%%%%%%%%%%%%%%%%%%%%%%%%%%%%%%%%%%%%%
%\section{Dirichlet Green's function}
%\input{tex_files/appendix}
%%%%%%%%%%%%%%%%%%%%%%%%%%%%%%%%%%%%%%%%%%%%%%%%%%%%%%%%%%%%%%%%%%%%%%%%%%%%%%%%%%%%%%%%%%%%%%%%%%
%\section{Helmholtz-Kirchhoff identity}

%\section{[Lemma 3.1][14]}

%\newpage
\bibliographystyle{siamplain}

\bibliography{references}
\end{document}

% --- supplement: SIMA_version/ex_supplement.tex ---

\maketitle

\section{A detailed example}

Here we include some equations and theorem-like environments to show
how these are labeled in a supplement and can be referenced from the
main text.
Consider the following equation:
\begin{equation}
  \label{eq:suppa}
  a^2 + b^2 = c^2.
\end{equation}
You can also reference equations such as \cref{eq:matrices,eq:bb} 
from the main article in this supplement.

\lipsum[100-101]

\begin{theorem}
  An example theorem.
\end{theorem}

\lipsum[102]
 
\begin{lemma}
  An example lemma.
\end{lemma}

\lipsum[103-105]

Here is an example citation: \cite{KoMa14}.

\section[Proof of Thm]{Proof of \cref{thm:bigthm}}
\label{sec:proof}

\lipsum[106-112]

\section{Additional experimental results}
\Cref{tab:foo} shows additional
supporting evidence. 

\begin{table}[htbp]
{\footnotesize
  \caption{Example table}  \label{tab:foo}
\begin{center}
  \begin{tabular}{|c|c|c|} \hline
   Species & \bf Mean & \bf Std.~Dev. \\ \hline
    1 & 3.4 & 1.2 \\
    2 & 5.4 & 0.6 \\ \hline
  \end{tabular}
\end{center}
}
\end{table}

\bibliographystyle{siamplain}
\bibliography{references}